\documentclass[12pt]{amsart}
\usepackage{etex}
\usepackage{amsfonts, amssymb, latexsym}
\usepackage{amsthm,xparse}

\usepackage{amscd,amssymb}
\usepackage{verbatim}
\usepackage{pstricks}
\usepackage{pst-node}
\usepackage[mathscr]{euscript}
\usepackage{tikz}
\usepackage{tikz-cd}
\usepackage{tkz-euclide}
\usetikzlibrary{matrix,arrows, calc,decorations.pathmorphing,shapes}
\usetikzlibrary{matrix,arrows,backgrounds,shapes.misc,shapes.geometric,patterns,calc,positioning}
\usetikzlibrary{chains,positioning,fit}
\usetikzlibrary{decorations.pathreplacing,calligraphy, decorations.pathmorphing}
\usetikzlibrary{arrows.meta}

\usepackage[normalem]{ulem}
\usepackage{varwidth}
\usepackage{leftidx}

\usepackage{wrapfig,caption}
\usepackage{xfp} 
\usepackage{soul}
\usepackage{float}

\usepackage[colorlinks=true, pdfstartview=FitV, linkcolor=blue, citecolor=blue, urlcolor=blue]{hyperref}

\usepackage{xcolor, fancyhdr}

\definecolor{darkmode}{RGB}{32, 31, 30}
\definecolor{gblue}{RGB}{190,209,210}
\definecolor{oblue}{RGB}{114, 160, 193}
\definecolor{dartmouthgreen}{rgb}{0.05, 0.5, 0.06}

\newsymbol\pp 1275

\usepackage{fullpage}
\usepackage{graphicx}
\usepackage{enumerate}
\usepackage{expl3}

\def\VR{\kern-\arraycolsep\strut\vrule &\kern-\arraycolsep}
\def\vr{\kern-\arraycolsep & \kern-\arraycolsep}

\newtheorem{theorem}{Theorem}[section]

\newtheorem{lemma}[theorem]{Lemma}
\newtheorem{prop}[theorem]{Proposition}
\newtheorem{corollary}[theorem]{Corollary}

\theoremstyle{definition}
\newtheorem{definition}[theorem]{Definition}

\newtheorem{rmk}[theorem]{Remark}

\newtheorem{qu}[theorem]{Question}

\newtheorem*{rmknonum}{Remark}

\newtheorem{obs}[theorem]{Observation}

\newtheorem{ex}[theorem]{Example}

\newcommand{\Hom}{\operatorname{Hom}}
\newcommand{\End}{\operatorname{End}}

\newcommand{\rep}{\operatorname{rep}}

\newcommand{\SI}{\operatorname{SI}}
\newcommand{\SL}{\operatorname{SL}}
\newcommand{\GL}{\operatorname{GL}}

\newcommand{\ZZ}{\mathbb Z}
\newcommand{\CC}{\mathbb C}
\newcommand{\RR}{\mathbb R}
\newcommand{\NN}{\mathbb N}

\newcommand{\be}{\begin{enumerate}} 
\newcommand{\ee}{\end{enumerate}}

\newcommand{\V}{V}

\newcommand{\Id}{\mathbf{Id}}

\newcommand{\ddim}{\operatorname{\mathbf{dim}}}

\newcommand{\Q}{\mathbf{\mathcal{Q}}}

\newcommand{\p}{\mathcal{P}}

\newcommand{\s}{\mathcal{S}}

\newcommand{\num}{\mathbf{T}}

\newcommand{\river}{\mathcal{T}}
\newcommand{\bfa}{\mathbf{a}}
\newcommand{\bfb}{\mathbf{b}}
\newcommand{\one}{\mathbf{1}}

\newcommand{\R}{\operatorname{\mathcal{R}}}

\newcommand\restr[2]{{
  \left.\kern-\nulldelimiterspace 
  #1 
  \vphantom{\big|} 
  \right|_{#2} 
  }}

\tikzset{snake it/.style={decorate, decoration=snake}}

\usetikzlibrary{decorations.pathreplacing,
                calligraphy}
\tikzset{
B/.style = {decorate,
            decoration={calligraphic brace, amplitude=8pt,
            raise=15pt, mirror},
            very thick,
            pen colour=black},
dot/.style = {circle, fill, inner sep=2pt, outer sep=0pt}
        }

\usetikzlibrary{decorations.shapes}

\tikzset{decorate sep/.style 2 args=
{decorate,decoration={shape backgrounds,shape=circle,shape size=#1,shape sep=#2}}}

\usetikzlibrary{quotes}

\begin{document}

\title{Counting 3-way contingency tables via quiver semi-invariants}

\author{Calin Chindris}
\address{University of Missouri-Columbia, Mathematics Department, Columbia, MO, USA}
\email[Calin Chindris]{chindrisc@missouri.edu}

\author{Deepanshu Prasad}
\address{International Center for Mathematical Sciences, Institute of Mathematics and
Informatics, Bulgarian Academy of Sciences, Acad. G. Bonchev Str., Bl. 8, Sofia
1113, Bulgaria}
\email[Deepanshu Prasad]{deepanshu.prasad@gmail.com}

\date{\today}
\subjclass[2010]{16G20, 13A50, 14L24}
\keywords{Contingency tables, Littlewood-Richardson coefficients, quiver exceptional sequences, quiver semi-invariants}

\begin{abstract} Let $\num_{\bfa,\bfb}$ be the number of $3$-way contingency tables of size $m \times n \times p$ with two of its three plane-sum margins fixed by $\bfa=(a_1, \ldots, a_m) \in \NN^m$ and $\bfb=(b_1, \ldots, b_n) \in \NN^n$.  When $p=1$, this is the number of $m \times n$ non-negative integer matrices whose row and column sums are fixed by $\bfa$ and $\bfb$.

In this paper, we study the numbers $\num_{\bfa, \bfb}$ through the lens of quiver invariant theory. Let $\Q^p_{m,n}$ be the $p$-complete bipartite quiver with $m$ source vertices, $n$ sink vertices, and $p$ arrows from each source to each sink. Let $\one$ denote the dimension vector of $\Q^p_{m,n}$ that takes value $1$ at every vertex of $\Q^p_{m,n}$, and let $\theta_{\bfa, \bfb}$ denote the integral weight that assigns $a_i$ to the $i^{th}$ source vertex and $-b_j$ to the $j^{th}$ sink vertex of $\Q^p_{m,n}$. 

We begin by realizing $\num_{\bfa, \bfb}$ as the dimension of the space of semi-invariants associated to $(\Q^p_{m,n}, \one, \theta_{\bfa, \bfb})$. Using this connection and methods from quiver invariant theory, we show that $\num_{\bfa,\bfb}$ is a parabolic Kostka coefficient. In the case $p=1$, this recovers the formula for the number of the $m \times n$ contingency tables with row and column sums fixed by $\bfa$ and $\bfb$, which in the classical $2$-way setting can also be obtained via the Robinson-Schensted-Knuth correspondence.

\end{abstract}

\maketitle
\setcounter{tocdepth}{1}
\tableofcontents

\section{Introduction} 
\label{intro-sec}

Let $m$, $n$, and $p$ be positive integers, and let $\bfa=(a_1, \ldots, a_{m}) $ and $\bfb=(b_1, \ldots, b_{n}) $ be two vectors with non-negative integer coefficients such that 
$$N:=\sum_{i=1}^{m} a_i=\sum_{j=1}^{n} b_j.$$

Let $\num_{\bfa,\bfb}$ be the number of all $3$-way contingency tables $X=(x_{ijk})_{(i,j,k) \in [m] \times [n] \times [p]}$ whose entries $x_{i,j,k}$ are non-negative integers such that
\begin{equation}\label{eqn-margin-1}
\sum_{j =1}^{n} \sum_{k=1}^{p} x_{ijk}=a_i, \forall i \in [m],
\end{equation}
and 
\begin{equation}\label{eqn-margin-2}
\sum_{i =1}^{m} \sum_{k=1}^{p} x_{ijk}=b_j, \forall j \in [n].
\end{equation}

In what follows, $K_{\lambda, \R}$ denotes the parabolic Kostka coefficient associated to a partition $\lambda$ and a sequence $\R=\{ R_1, \ldots, R_s \}$ of rectangular partitions. Thus $K_{\lambda, \R}$ is the multiplicity of the irreducible representation of $\GL(r)$ of highest weight $\lambda$ in the tensor product of the irreducible representations with highest weights $R_1, \ldots, R_s$. We assume that $\lambda$ has at most $r$ non-zero parts and that each $R_i$ has height at most $r$.

\begin{theorem} \label{thm:main-1} With the notation as above,
\begin{equation}\label{eqn:main-formula-parab-Kostka}
\num_{a,b}=K_{\lambda, \R},
\end{equation}
where $\lambda=((p N)^{p m})$ and $\R$ is the following sequence of rectangular partitions
\begin{equation}\label{eqn-1}
\R=\{ \underbrace{(N^{(p-1)m}), \ldots, (N^{(p-1)m})}_{p \text{~times}}, (a_1^{pm-1}), \ldots, (a_m^{pm-1}), (b_1), \ldots, (b_n) \}.
\end{equation}
\end{theorem}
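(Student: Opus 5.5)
The plan is to pass from contingency tables to quiver semi-invariants, and then transport the semi-invariant space to a star-shaped quiver whose semi-invariants are known to be Littlewood--Richardson (here parabolic Kostka) multiplicities.

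\textbf{Step 1: tables as semi-invariants.} A representation in $\rep(\Q^p_{m,n},\one)$ is a tuple of scalars $x_{ijk}$, one for the $k$-th arrow from source $i$ to sink $j$. The group $\GL(\one)=(\CC^*)^{m+n}$ acts by $x_{ijk}\mapsto t_i^{-1}s_j x_{ijk}$, up to the sign convention for weights. A monomial $\prod x_{ijk}^{e_{ijk}}$ is therefore a semi-invariant of weight $\theta_{\bfa,\bfb}$ exactly when $(e_{ijk})$ satisfies (\ref{eqn-margin-1}) and (\ref{eqn-margin-2}). Since monomials are torus weight vectors and form a basis of the coordinate ring, $\dim \SI(\Q^p_{m,n},\one)_{\theta_{\bfa,\bfb}}=\num_{\bfa,\bfb}$.

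\textbf{Step 2: reduce to a star quiver.} I would invoke the Derksen--Weyman machinery stated earlier: for an orthogonal exceptional sequence, semi-invariant spaces are preserved under passage to the perpendicular category, and reflection functors at sinks and sources preserve dimensions of weight spaces. The goal is to land on a star quiver with the following data.
\begin{itemize}
\item The central vertex has dimension $pm$ and carries weight corresponding to $\det^{pN}$, which produces $\lambda=((pN)^{pm})$.
\item There are $p$ arms whose vertex has dimension $(p-1)m$. These are the complements of the $p$ copies of the $m$-dimensional ``source block'' inside $\CC^{pm}$, each with weight $N$.
\item There are $m$ arms of dimension $pm-1$ with weights $a_i$.
\item There are $n$ arms of dimension $1$ with weights $b_j$.
\end{itemize}

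A concrete way to reach this is to view the $pn$ arrows out of each source as follows. Group the $p$ arrows of colour $k$ so that the sources together span a copy of $\CC^m$ for each $k$, and the direct sum over $k$ gives a $pm$-dimensional space. Each sink $j$ (dimension $1$, receiving $pm$ arrows) becomes a line in $(\CC^{pm})^*$, which gives the $b_j$ arms. Each source $i$ becomes, after dualizing or reflecting, a hyperplane of $\CC^{pm}$, which gives the $a_i$ arms. The $p$ coordinate blocks become the $(N^{(p-1)m})$ arms, with the $\det^{pN}$ at the center compensating so that the total weight is zero.

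\textbf{Step 3: compute the star quiver semi-invariants.} For a star quiver with central dimension $r$ and one-vertex arms of dimensions $d_1,\ldots,d_s$, the results quoted earlier (or Cauchy's formula plus Schur--Weyl) give that $\dim\SI$ in weight $\sigma$ equals the multiplicity of $V_{\lambda}$ in $\bigotimes_l V_{(\sigma_l^{d_l})}$. Here each $\GL(d_l)$-invariant part of $\CC[\Hom(\CC^{d_l},\CC^r)]$ in degree $\sigma_l$ is the Schur module of the rectangle $(\sigma_l^{d_l})$. Substituting the data from Step 2 yields $K_{\lambda,\R}$ with $\R$ as in (\ref{eqn-1}).

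\textbf{Main obstacle.} The hard part is Step 2: choosing the exceptional sequence (equivalently, the sequence of reflections and gluing) that identifies $\SI(\Q^p_{m,n},\one)_{\theta_{\bfa,\bfb}}$ with the star quiver's semi-invariants, and checking that the dimension vectors and weights transform exactly into the listed rectangles. I would first test the $p=1$ case, where the answer should reduce to the RSK/Kostka formula for matrices with fixed row and column sums. I would then verify that the $p$ blocks contribute precisely the factors $(N^{(p-1)m})$ by checking that $\sum_l |\R_l|=|\lambda|$, which gives $p(p-1)mN+(pm-1)N+N=p^2mN$.
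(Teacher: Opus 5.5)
Your Steps 1 and 3 are sound. Step 1 is the monomial form of the network-flow count, and Step 3 is the Cauchy-formula computation the paper performs on the reflected star quiver $\widetilde{\s}$. The gap is Step 2, which you flag as the main obstacle but never carry out.

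\textbf{What is missing in Step 2.} You describe the star quiver you want to reach, but you give no exceptional sequence, no intermediate quiver, and no result that yields an equality of weight-space dimensions. Your ``concrete way'' paragraph is a heuristic about generic orbits and stabilizers. By itself it does not prove $\dim\SI(\Q^p_{m,n},\one)_{\theta_{\bfa,\bfb}}=\dim\SI(\s,\beta)_{\sigma_{\bfa,\bfb}}$. The tool is also misdescribed: the relevant sequence is \emph{not} orthogonal. It is precisely the nonzero pairings $-\langle\varepsilon_i,\varepsilon_j\rangle=p$ that produce the $p$ arrows from each source to each sink. Finally, the check $\sum_l|\R_l|=|\lambda|$ is only a necessary consistency condition and proves nothing.

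\textbf{How the paper fills it.} The key idea is to realize $\Q^p_{m,n}$ as $\river(\mathcal{E})$ for an exceptional sequence $\mathcal{E}$ in a larger quiver, and then apply the Embedding Theorem \ref{thm:embedding}.
\begin{enumerate}
\item The paper takes the quiver $\river$ in which $x_1,\dots,x_{m+p}$ all point to $x_0$, followed by $x_0\to y_0$ and $y_0\to y_j$ for $j\in[n]$.
\item It uses $\mathcal{E}=(\delta_1,\dots,\delta_m,e_{y_1},\dots,e_{y_n})$. Here $\delta_i$ equals $1$ at $x_i$ and at $x_{m+1},\dots,x_{m+p}$, $p+1$ at $x_0$, $p$ at $y_0$, and $0$ elsewhere. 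Then $\langle\delta_i,e_{y_j}\rangle=-p$ and all other pairings between distinct terms vanish, so $\river(\mathcal{E})=\Q^p_{m,n}$.
\item Write $\theta_{\bfa,\bfb}=\langle\alpha,-\rangle$ with $\alpha(x_i)=a_i$ and $\alpha(y_j)=pN-b_j$. The Embedding Theorem gives $\num_{\bfa,\bfb}=\dim\SI(\river,I(\one))_{\langle I(\alpha),-\rangle}$.
\item This weight vanishes at $x_0$, and $I(\one)(x_0)=(p+1)m\ge pm=I(\one)(y_0)$. So Lemma \ref{lemma:remove-zero-weight} deletes $x_0$ and produces the star quiver $\s$. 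Its center has dimension $pm$ and weight $-N$. Its incoming arms have dimension $1$ and weight $a_i$, or dimension $m$ and weight $N$. Its outgoing arms have dimension $1$ and weight $-b_j$.
\item Only then do the reflections at the $m+p$ sources give your arms of dimensions $pm-1$ and $(p-1)m$ and the central weight $pN$. At that point your Step 3 applies.
\end{enumerate}

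A shortcut also appears to work. The restrictions of the $\delta_i$ to $\s$ (value $p$ at the center), together with the $e_{y_j}$, seem to form an exceptional sequence in $\s$ itself, with $I(\one)=\beta$. This would let you skip $\river$. Either way, an explicit construction of this kind is the step your proposal must supply.
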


To prove our formula, we first express $\num_{\bfa, \bfb}$ as the dimension of a weight space of semi-invariants for a $p$-complete bipartite quiver. We then apply general reduction techniques, such as the Embedding Theorem \ref{thm:embedding} for quiver semi-invariants, to reduce the problem to computing semi-invariants for a star quiver. These, in turn, can be expressed as parabolic Kostka numbers.

We note that when $p=1$, $\num_{\bfa, \bfb}$ is the number of $m \times n$ non-negative integer matrices whose row and column sums are fixed by $\bfa$ and $\bfb$. In this case, $(\ref{eqn:main-formula-parab-Kostka})$ recovers the classical formula arising from the Robinson–Schensted–Knuth correspondence, but by a quiver-invariant-theoretic argument that does not use the correspondence itself; see Corollary \ref{coro:RSK-corollary} for full details.

\section{Weight spaces of quiver semi-invariants: the tools}
Throughout, we work over the field $\CC$ of complex numbers and denote by $\NN=\{0,1,\dots \}$. For a positive integer $L$, we denote by $[L]=\{1, \ldots, L\}$.

A \emph{quiver} $\Q=(\Q_0,\Q_1,t,h)$ consists of two finite sets $\Q_0$ (vertices) and $\Q_1$ (arrows) together with two maps $t:\Q_1 \to \Q_0$ (tail) and $h:\Q_1 \to \Q_0$ (head). We represent $\Q$ as a directed graph with set of vertices $\Q_0$ and directed edges $a:ta \to ha$ for every $a \in \Q_1$.  

A \emph{representation} of $\Q$ is a family $V=(V_x, V_a)_{x \in \Q_0, a\in \Q_1}$, where $V_x$ is a finite-dimensional $\CC$-vector space for every $x \in \Q_0$, and $V_a: V_{ta} \to V_{ha}$ is a $\CC$-linear map for every $a \in \Q_1$. After fixing bases for the vector spaces $\V_x$, $x \in \Q_0$, we often think of the linear maps $\V_a$, $a \in \Q_1$, as matrices of appropriate size. 

A\emph{ morphism} $\varphi: V \rightarrow W$ between two representations is a collection $(\varphi_x))_{x \in \Q_0}$ of $\CC$-linear maps with $\varphi_x \in \Hom_{\CC}(V_x, W_x)$ for every $x \in \Q_0$, and such that $\varphi_{ha} \circ V_{a}=W_{a} \circ \varphi_{ta}$ for every $a \in \Q_1$. The $\CC$-vector space of all morphisms from $V$ to $W$ is denoted by $\Hom_\Q(V, W)$.

The \emph{dimension vector} $\ddim V \in \NN^{\Q_0}$ of a representation $V$  is defined by $(\ddim V)_x:=\dim_\CC V_x$ for all $x \in \Q_0$. By a \emph{dimension vector} of $Q$, we simply mean an $\NN$-valued function on the set of vertices $\Q_0$. We say a dimension vector $\beta$ is \emph{sincere} if $\beta_x>0$ for every $x \in \Q_0$. The simple dimension vector at $x \in \Q_0$, denoted by $e_x$, is defined by $e_x(y)=\delta_{x,y}$, $\forall y \in \Q_0$, where $\delta_{x,y}$ is the Kronecker symbol. 

The \emph{Euler form} (also known as the Ringel form) of $\Q$ is the bilinear form on $\ZZ^{\Q_0}$ defined by
$$
\langle \alpha, \beta \rangle:=\sum_{x \in \Q_0}\alpha_x\beta_x-\sum_{a \in \Q_1} \alpha_{ta}\beta_{ha}, \; \forall \alpha, \beta \in \ZZ^{\Q_0}.
$$

From now on, we assume that all of our quivers are finite, connected, and acyclic. Then, for any integral weight $\sigma \in \ZZ^{\Q_0}$, there exists a unique $\alpha \in \ZZ^{\Q_0}$ such that $\sigma_x=\langle \alpha, e_x \rangle$, $\forall x \in \Q_0$. 

Let $\beta$ be a sincere dimension vector of a quiver $\Q$, and let us consider the\emph{ representation space} of $\beta$-dimensional representations of $Q$,
$$
\rep(\Q, \beta):=\prod_{a \in \Q_1} \CC^{\beta_{ha} \times \beta_{ta}}.
$$ 
The base change group $\GL(\beta):=\prod_{x \in \Q_0} \GL(\beta_x)$ acts on $\rep(\Q, \beta)$ by simultaneous conjugation, \emph{i.e.}, for $g=(g_x)_{x \in \Q_0} \in \GL(\beta)$ and $W=(W_a)_{a \in \Q_1} \in \rep(\Q,\beta)$, we define $g \cdot W \in \rep(\Q, \beta)$ by
\[
(g \cdot W)_a:=g_{ha}\cdot W_a \cdot g_{ta}^{-1}, \; \forall a \in \Q_1.
\]

\noindent
This action descends to that of the subgroup
\[
\SL(\beta) := \prod_{x \in \Q_0} \SL(\beta_x),
\]
giving rise to a highly non-trivial ring of semi-invariants $\SI(Q,\beta):= \CC[\rep(\Q,\beta)]^{\SL(\beta)}$. (We point out that since $Q$ is assumed to be acyclic, the invariant ring $\CC[\rep(\Q, \beta)]^{\GL(\beta)}$ is precisely $\CC$.) Since $\GL(\beta)$ is linearly reductive and $\SL(\beta)$ is its commutator subgroup, we have the weight space decomposition 
\[
\SI(\Q,\beta) = \bigoplus_{\chi \in X^*(\GL(\beta))} \SI(\Q,\beta)_{\chi},
\]
where $X^*(\GL(\beta))$ is the group of rational characters of $\GL(\beta)$ and 
\[
\SI(\Q,\beta)_{\chi}:= \{f \in \CC[\rep(\Q,\beta)] \mid g \cdot f = \chi(g)f, \, \forall g \in \GL(\beta)\}
\]
is the space of \emph{semi-invariants of weight $\chi$}. 

Every \emph{integral weight} $\sigma \in \ZZ^{\Q_0}$ defines a character $\chi_{\sigma}$ of $\GL(\beta)$  by 
\begin{equation}
\chi_{\sigma}(g):=\prod_{x \in \Q_0} (\det g_x)^{\sigma_x} \qquad \text{for all } g=(g_x)_{x \in \Q_0} \in \GL(\beta).
\end{equation}
Moreover, every character of $\GL(\beta)$ is of the form $\chi_{\sigma}$ for some integral weight $\sigma \in \ZZ^{\Q_0}$. If $\beta$ is sincere,  this gives a one-to-one correspondence,  so we may identify the character group with $\ZZ^{\Q_0}$.  In what follows, we write $\SI(\Q, \beta)_{\sigma}$ for $\SI(\Q, \beta)_{\chi_\sigma}$.

We state a reduction tool that will come on handy when proving Theorem \ref{thm:main-1}. It allows us to remove a vertex of weight zero, provided its dimension is at least that of the head of the unique outgoing arrow.

\begin{lemma}[\textbf{Removing vertices of zero weight}; see Lemma 4.6 in \cite{CC6}]\label{lemma:remove-zero-weight}
Let $\Q$ be a quiver and $v_0$ a vertex such that near $v_0$, $\Q$ looks like:
\[
\begin{tikzpicture}[>=Latex, line cap=round, line join=round]
  \node (v0) at (0,0) {$v_0$};
  \node (w)  at (2.2,0) {$w$};

  \node (v1) at (-2.0, 0.75) {$v_1$};
  \node (vl) at (-2.0,-0.75) {$v_\ell$};

  \node at (-2.0,0) {$\vdots$};

  \draw[->] (v1) -- node[above, inner sep=1pt] {$a_1$} (v0);
  \draw[->] (vl) -- node[below, inner sep=1pt] {$a_\ell$} (v0);

  \draw[->] (v0) -- node[above, inner sep=1pt] {$b$} (w);
\end{tikzpicture}
\]

Suppose that $\beta$ is a dimension vector and $\sigma$ is a weight such that
\[
\beta(v_0)\ge \beta(w)
\qquad\text{and}\qquad
\sigma(v_0)=0.
\]
Let $\overline{Q}$ be the quiver defined by $\overline{\Q}_0 = \Q_0\setminus\{v_0\}$ and
\[
\overline{\Q}_1
= \bigl(\Q_1\setminus\{b,a_1,\dots,a_\ell\}\bigr)\,\cup\,\{ba_1,\dots,ba_\ell\}.
\]
If $\overline{\beta}=\beta|_{\overline{Q}}$ and $\overline{\sigma}=\sigma|_{\overline{\Q}}$ are the restrictions
of $\beta$ and $\sigma$ to $\overline{\Q}$ then
\[
\SI(\Q,\beta)_\sigma \;\cong\; \SI(\overline{\Q},\overline{\beta})_{\overline{\sigma}}.
\]
\end{lemma}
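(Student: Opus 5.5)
The plan is to construct an explicit isomorphism by pulling back along the composition morphism and then show it is bijective using first fundamental theorem–type arguments for the group $\GL(\beta(v_0))$. Write $d=\beta(v_0)$ and $e=\beta(w)$, so $d\ge e$. Consider the $\GL(\beta)$-equivariant morphism
\[
\pi:\rep(\Q,\beta)\to \rep(\overline{\Q},\overline{\beta}),\qquad (V_a)_{a\in\Q_1}\mapsto \bigl((V_c)_{c\notin\{b,a_1,\dots,a_\ell\}},\,V_bV_{a_1},\dots,V_bV_{a_\ell}\bigr).
\]
Here $\GL(\beta)=\GL(\overline{\beta})\times\GL(d)$, and the factor $\GL(d)$ acts trivially on the target. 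Pullback $f\mapsto f\circ\pi$ sends $\SI(\overline{\Q},\overline{\beta})_{\overline{\sigma}}$ into $\SI(\Q,\beta)_\sigma$: the $\GL(\overline{\beta})$-weight is preserved by equivariance, and the $\GL(d)$-weight is trivial, which matches $\sigma(v_0)=0$. It therefore suffices to prove that $\pi^*$ is injective and that every $\GL(d)$-invariant function on $\rep(\Q,\beta)$ factors through $\pi$. Taking $\GL(\overline\beta)$-weight spaces afterwards is harmless because $\pi^*$ commutes with that action.

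For injectivity, I will show that $\pi$ is dominant, and in fact surjective. Since $d\ge e$, we can choose $V_b$ to be surjective, that is, an $e\times d$ matrix of full rank $e$. Then any prescribed tuple of maps $M_i:V_{v_i}\to V_w$ can be realized as $V_bV_{a_i}$ by taking $V_{a_i}=SM_i$, where $S$ is a right inverse of $V_b$. Because $\pi$ is surjective, the pullback $\pi^*$ is injective.

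The key step, and the main obstacle, is surjectivity: showing that $\CC[\rep(\Q,\beta)]^{\GL(d)}$ is generated by the matrix entries of the products $V_bV_{a_i}$ together with the coordinates on the untouched arrows. The untouched factor splits off as a tensor factor, so the question reduces to $\GL(d)$ acting on $\Hom(\CC^d,\CC^e)\times\prod_i\Hom(\CC^{\beta(v_i)},\CC^d)$. Combining all the $V_{a_i}$ into a single $d\times(\sum_i\beta(v_i))$ matrix $A$, this is the classical setting of $\GL(d)$ acting on $e$ covectors (the rows of $V_b$) and $\sum_i\beta(v_i)$ vectors (the columns of $A$). The first fundamental theorem of invariant theory for $\GL_d$ says the invariants are generated by the pairings, which are exactly the entries of $V_bA$. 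Note that the FFT for $\GL$ requires no hypothesis relating $d$ and $e$; the inequality $d\ge e$ is needed only in the injectivity step, where it guarantees that no relations among the entries of $V_bA$ survive.

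Putting the steps together, pullback along $\pi$ gives an isomorphism $\CC[\rep(\overline{\Q},\overline{\beta})]\cong\CC[\rep(\Q,\beta)]^{\GL(d)}$. This isomorphism is $\GL(\overline\beta)$-equivariant. Taking $\SL(\overline\beta)$-semi-invariants of weight $\overline\sigma$ on both sides, and recalling that $\sigma(v_0)=0$, yields $\SI(\Q,\beta)_\sigma\cong\SI(\overline{\Q},\overline{\beta})_{\overline{\sigma}}$.
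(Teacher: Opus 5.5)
Your proposal is correct and follows essentially the same route as the cited proof, whose main ingredient the paper names as the First Fundamental Theorem for $\GL(n)$. You use $\sigma(v_0)=0$ to place $\SI(\Q,\beta)_\sigma$ inside $\CC[\rep(\Q,\beta)]^{\GL(\beta(v_0))}$, invoke the First Fundamental Theorem to identify that invariant ring with the image of $\CC[\rep(\overline{\Q},\overline{\beta})]$, and use $\beta(v_0)\ge\beta(w)$ to rule out relations; you get the no-relations step cleanly from surjectivity of the composition map rather than from the Second Fundamental Theorem.
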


\begin{rmk} 
We note that, although the proof in \cite{CC6} is rather short, its main ingredient is the Fundamental Theorem for $\GL(n)$. 
\end{rmk}

\subsection{Network flow polytopes} When the dimension vector is equal to one at every vertex of $\Q$, one can describe the dimensions of the corresponding spaces of semi-invariants in terms of network flows. This description that allows us to express $\num_{\bfa, \bfb}$ as dimensions of weight spaces of quiver semi-invariants.

Let $\sigma \in \ZZ^{\Q_0}$ be an integral weight of $\Q$. The\emph{ network flow polytope} associated to $(\Q, \sigma) $ is defined by
\begin{equation}\label{defn:network-flow-polytope}
\p_\sigma:=\left\{(x_a)_{a \in \Q_1} \in \RR^{\Q_1}_{\geq 0} \;\middle|\;  \sum_{\substack{a\in \Q_1\\ t a = x}} x_a
\;-\;
\sum_{\substack{a\in \Q_1\\ h a = x}} x_a=\sigma_x \qquad \text{for all } x \in \Q_0  \right\}.
\end{equation}

\begin{lemma}[\textbf{Network flows from quiver semi-invariants}; see Lemma in \cite{CC6}]\label{lemma:flows-semi-inv-general} Let $\mathbf{1}$ be the dimension vector that is equal to one at every vertex of $\Q$, and let $\sigma \in \mathbb{Z}^{\Q_0}$ be a weight. Then
\[
\dim_\CC \SI(\Q,\mathbf{1})_\sigma=
\left| \p_{\sigma}\cap \ZZ^{\Q_0} \right|.
\]
\end{lemma}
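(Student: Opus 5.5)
The plan is to compute the weight space directly. When $\beta=\mathbf{1}$ we have $\rep(\Q,\mathbf{1})=\CC^{\Q_1}$ with coordinates $(x_a)_{a\in\Q_1}$, and $\GL(\mathbf{1})=(\CC^*)^{\Q_0}$ is a torus. Here $\SL(\mathbf{1})$ is trivial, so $\SI(\Q,\mathbf{1})=\CC[x_a \mid a\in\Q_1]$, and the whole question is which monomials lie in which character space. Since a torus acts diagonally on the monomial basis, each weight space $\SI(\Q,\mathbf{1})_\sigma$ is spanned by the monomials it contains. So $\dim_\CC \SI(\Q,\mathbf{1})_\sigma$ equals the number of exponent vectors $(c_a)\in\NN^{\Q_1}$ whose monomial $x^c=\prod_a x_a^{c_a}$ has weight $\chi_\sigma$. (The lattice in the statement should be read as $\ZZ^{\Q_1}$, the coordinates of $\p_\sigma$.)

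First, compute the action on a coordinate. For $g=(g_x)\in(\CC^*)^{\Q_0}$ we have $(g\cdot W)_a=g_{ha}W_ag_{ta}^{-1}$. With the usual convention $(g\cdot f)(W)=f(g^{-1}\cdot W)$, this gives $g\cdot x_a=g_{ta}g_{ha}^{-1}x_a$. Hence
\[
g\cdot x^c=\prod_{x\in\Q_0} g_x^{\sum_{ta=x}c_a-\sum_{ha=x}c_a}\, x^c .
\]

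Next, read off the weight condition. The monomial $x^c$ lies in $\SI(\Q,\mathbf{1})_\sigma$ exactly when this character equals $\chi_\sigma(g)=\prod_x g_x^{\sigma_x}$. Because the characters of $(\CC^*)^{\Q_0}$ correspond bijectively to $\ZZ^{\Q_0}$ (as $\beta=\mathbf{1}$ is sincere), this happens if and only if
\[
\sum_{ta=x}c_a-\sum_{ha=x}c_a=\sigma_x \quad \text{for all } x\in\Q_0 .
\]
Together with $c_a\ge 0$, these are exactly the defining conditions of $\p_\sigma$ in (\ref{defn:network-flow-polytope}). So there is a bijection between the monomial basis of $\SI(\Q,\mathbf{1})_\sigma$ and the lattice points of $\p_\sigma$. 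This bijection also shows the dimension is finite, since the polytope is bounded for an acyclic quiver.

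The main obstacle is the sign convention. If instead the action on functions is $g\cdot f=f\circ g$, the flow condition comes out with $-\sigma$ in place of $\sigma$. I would fix the convention implicit in the paper, namely the one under which $\det$-type semi-invariants have weights given by $\langle\alpha,\cdot\rangle$. The source-positive orientation of $\p_\sigma$ should then match that convention, as it does under $g\cdot f=f(g^{-1}\cdot)$. If the paper used the other convention, the identical argument applies after replacing $\sigma$ by $-\sigma$ or reversing all arrows.
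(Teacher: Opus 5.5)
Your proof is correct. The paper gives no proof of its own here (it only cites \cite{CC6}), and your direct computation is the standard argument: $\SL(\mathbf{1})$ is trivial, monomials in the arrow coordinates are weight vectors for the torus $(\CC^*)^{\Q_0}$, and $\prod_a x_a^{c_a}$ has weight $\sigma$ exactly when $c\in\NN^{\Q_1}$ is a flow with net outflow $\sigma_x$ at each vertex, with the lattice correctly read as $\ZZ^{\Q_1}$. Your convention $(g\cdot f)(W)=f(g^{-1}\cdot W)$ is the one the paper forces: under it the determinantal semi-invariant of a representation $V$ has weight $\langle \ddim V,\cdot\rangle$, and $\p_{\theta_{\bfa,\bfb}}$ is the set of contingency tables. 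So your final hedge is unnecessary, and it is slightly misleading: under the opposite convention you would get $\p_{-\sigma}$, so the lemma as stated would fail rather than follow by the same argument.
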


\subsection{Reflection transformations} We will also make use of reflection transformations to establish $(\ref{eqn:main-formula-parab-Kostka})$. Let $\beta \in \ZZ^{\Q_0}_{\geq 0}$ be a dimension vector, $\sigma \in \ZZ^{\Q_0}$, an integral weight and a $x \in \Q_0$ a vertex. We define $s_x \Q$ to be the quiver obtained from $\Q$ by reversing all arrows that start or end in $x$. We also define $s_x\beta \in \ZZ^{\Q_0}$ by
\[
(s_x \beta)_y=
\begin{cases}
\beta_y, & \text{if } x \ne y,\\[4pt]
-\beta_x + \displaystyle\sum_{\text{edges } x\text{---}z} \beta_z, & \text{if } x = y,
\end{cases}
\]
and $s_x \sigma \in \ZZ^{\Q_0}$ by
\[
(s_x \sigma)_y=
\begin{cases}
-\sigma_x, & \text{if } y=x,\\[4pt]
\sigma_y + \sigma_x b_{xy}, & \text{if } x \neq y,
\end{cases}
\]
where $b_{xy}$ is the number of edges between $x$ and $y$.

\begin{theorem} Suppose that $x$ is a sink or a source vertex and $s_x\beta$ is a dimension vector, \emph{i.e.}, $s_x\beta \in \ZZ_{\geq 0}^{\Q_0}$. Then
\begin{equation}\label{eqn:semi-inv-reflections}
\SI(\Q, \beta)_{\sigma} \simeq \SI(s_x \Q, s_x\beta)_{s_x\sigma}.
\end{equation}
\end{theorem}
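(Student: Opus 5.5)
We treat the case where $x$ is a source; the sink case follows by the same argument or by passing to opposite quivers and dual representations. Write $\beta_x$ for the dimension at $x$ and $\gamma_x:=\sum_{a:\,ta=x}\beta_{ha}$. The hypothesis $s_x\beta\ge 0$ reads $\gamma_x\ge\beta_x$, so $(s_x\beta)_x=\gamma_x-\beta_x$. The approach is the classical first fundamental theorem and Plücker duality argument of Kac and of Kraft--Riedtmann, done one vertex at a time. We split
\[
\rep(\Q,\beta)=\Hom(\CC^{\beta_x},\CC^{\gamma_x})\times R',\qquad \rep(s_x\Q,s_x\beta)=\Hom(\CC^{\gamma_x},\CC^{\gamma_x-\beta_x})\times R',
\]
where $R'$ collects the arrows not incident to $x$ and $\CC^{\gamma_x}=\bigoplus_{ta=x}\CC^{\beta_{ha}}$. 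The other factors $\GL(\beta_y)$, $y\neq x$, act the same way on both sides.

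\textbf{Step 1: take the $\GL(\beta_x)$-weight part.} We first take the $\GL(\beta_x)$-isotypic part of weight $\det^{\sigma_x}$ in $\CC[\Hom(\CC^{\beta_x},\CC^{\gamma_x})]$. If $\beta_x=0$ the statement is trivial after the bookkeeping, so assume $\beta_x\geq 1$.
\begin{itemize}
\item If $\sigma_x<0$, this part is zero. One checks that the opposite sign condition then also kills the right-hand side.
\item If $\sigma_x\geq 0$, the first fundamental theorem for $\SL(\beta_x)$ identifies it with the degree-$\sigma_x$ part of the homogeneous coordinate ring of $\mathrm{Gr}(\beta_x,\gamma_x)$. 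Concretely, it is the span of products of $\sigma_x$ maximal minors, which as a $\GL(\gamma_x)$-module is $S_{(\sigma_x^{\beta_x})}(\CC^{\gamma_x})^{*}$.
\end{itemize}
In the same way, on the reflected side, the $\GL(\gamma_x-\beta_x)$-part of weight $\det^{-\sigma_x}$ in $\CC[\Hom(\CC^{\gamma_x},\CC^{\gamma_x-\beta_x})]$ is the coordinate ring of $\mathrm{Gr}(\gamma_x-\beta_x,\gamma_x)$ in degree $\sigma_x$ (up to sign conventions).

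\textbf{Step 2: match the two sides.} Duality $\mathrm{Gr}(\beta_x,\gamma_x)\cong\mathrm{Gr}(\gamma_x-\beta_x,(\CC^{\gamma_x})^*)$, which sends a subspace to its annihilator, identifies these two $\GL(\gamma_x)$-modules up to a twist by $\det^{\sigma_x}$ of $\GL(\gamma_x)$. This is the identity
\[
S_{(\sigma^{\beta})}(V)^*\cong S_{(\sigma^{\gamma-\beta})}(V)\otimes(\det V)^{-\sigma}
\]
(Plücker coordinates versus complementary Plücker coordinates). The twist by $\det(\CC^{\gamma_x})^{\pm\sigma_x}$ restricts to $\prod_{ta=x}\det(g_{ha})^{\pm\sigma_x}$. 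This is exactly the change $\sigma_y\mapsto\sigma_y+b_{xy}\sigma_x$ in $s_x\sigma$.

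\textbf{Step 3: tensor and take invariants.} Tensoring with $\CC[R']$ and taking $\prod_{y\neq x}\GL(\beta_y)$-semi-invariants of the matching weights then gives the isomorphism in (\ref{eqn:semi-inv-reflections}).

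\textbf{Main obstacle.} The main obstacle is Step 2 together with the sign conventions: checking that the twist produces $+\sigma_x b_{xy}$ rather than $-\sigma_x b_{xy}$ with the paper's convention that $\sigma_x=\langle\alpha,e_x\rangle$ and the action $g\cdot f$. We would check this first on the one-arrow case $\beta_x=1$, $\gamma_x=2$, and then fix the signs (or dualize appropriately) so that the general computation agrees with the stated formula for $s_x\sigma$.
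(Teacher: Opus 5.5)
The paper quotes this reflection result without proof. Your Steps 1--3 are the standard Kac/Kraft--Riedtmann (castling) argument, and they are correct in the main case $\beta_x\ge 1$, $(s_x\beta)_x\ge 1$.

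The sign you single out as the main obstacle is in fact forced. As $\GL(\gamma_x)$-modules, the left $x$-part is
$S_{(\sigma_x^{\beta_x})}(\CC^{\gamma_x})^*\cong S_{(\sigma_x^{\gamma_x-\beta_x})}(\CC^{\gamma_x})\otimes\det(\CC^{\gamma_x})^{-\sigma_x}$,
and $\det(\CC^{\gamma_x})$ restricts to $\prod_y\det(g_y)^{b_{xy}}$. Hence a vector of $\prod_{y\ne x}\GL(\beta_y)$-weight $(\sigma_y)_y$ on the left corresponds to weight $(\sigma_y+b_{xy}\sigma_x)_y$ on the right. Since $\sigma\mapsto s_x\sigma$ is linear, switching the convention for $g\cdot f$ (which negates all weights) cannot flip this sign. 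There is nothing to ``fix''.

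\textbf{The gap: the degenerate cases.} It cannot be closed, because the statement as written is false there. Use the paper's convention, the one for which Lemma~\ref{lemma:flows-semi-inv-general} holds, so that weights at sources are $\ge 0$.

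\emph{The case $\gamma_x=\beta_x$.} Your claim that for $\sigma_x<0$ ``the opposite sign condition also kills the right-hand side'' needs $(s_x\beta)_x\ge 1$. If $\gamma_x=\beta_x$, the group $\GL(0)$ is trivial and $\det^{-\sigma_x}$ imposes no condition. Concretely, take $Q\colon x\to y$ with $\beta=(1,1)$ (sincere) and $\sigma=(-1,1)$. Then $s_x\beta=(0,1)$ and $s_x\sigma=(1,0)$. On the left, $\SI(Q,\beta)_\sigma=0$, since no flow has outflow $-1$ at a source. On the right, $\rep(s_xQ,s_x\beta)=0$ and $\chi_{(1,0)}$ is trivial on $\GL(0)\times\GL(1)$, so $\SI(s_xQ,s_x\beta)_{s_x\sigma}=\CC$.

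\emph{The case $\beta_x=0$.} Your claim that this case is ``trivial after the bookkeeping'' also fails when $\sigma_x<0$. The left side does not depend on $\sigma_x$ at all. On the right, $(s_x\beta)_x=\gamma_x$, and whenever $\gamma_x\ge 1$ the sign condition at the new sink kills it.

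\emph{The fix.} You must add a hypothesis. Either require $\sigma_x\ge 0$ when $x$ is a source ($\sigma_x\le 0$ when $x$ is a sink), or require $\beta_x\ge 1$ and $(s_x\beta)_x\ge 1$. Under the sign hypothesis your Steps 1--3 go through verbatim for every $0\le\beta_x\le\gamma_x$. The reason is that the identity $S_{(\sigma^{\beta})}(V)^*\cong S_{(\sigma^{\gamma-\beta})}(V)\otimes(\det V)^{-\sigma}$ already covers $\beta=0$ and $\beta=\gamma$, so no separate bookkeeping is needed.

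This signed version is also exactly what the paper uses. It reflects at the sources of $\s$, whose weights $a_\ell$ and $N$ are $\ge 0$. There $\widetilde{\beta}(x_{m+\ell})=m(p-1)$ vanishes when $p=1$, so sincerity of $s_x\beta$ is not available in the application.
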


\subsection{Quiver exceptional sequences} The notion of a quiver exceptional sequence, which we review below, plays a key role in our computations, since it allows us to reduce the problem to computing the dimensions of weight spaces of semi-invariants for a star quiver. These can then be expressed as parabolic Kostka coefficients.

In what follows, by a \emph{Schur representation} $\V$ of $\Q$, we mean a representation such that
$\dim \End_\Q(\V)=1$, that is, $\End_\Q(\V)=\{(\lambda \Id_{\V(x)})_{x\in \Q_0}\mid \lambda\in \CC\}$.

For two dimension vectors $\alpha$ and $\beta$, we define $(\alpha \circ \beta)_\Q:=\dim \SI(\Q, \beta)_{\langle \alpha, \cdot \rangle}$. (We drop the subscript $\Q$ whenever $\Q$ is understood from the context.) It follows from the main results in \cite{DW1} that $\alpha\circ\beta \neq 0$ if and only if $\langle \alpha,\beta\rangle =0$ and $\Hom_\Q(V,W)=0$ for some representations $V$ and $W$ of dimension vectors $\alpha$ and $\beta$, respectively.

\begin{definition}[\textbf{Quiver Exceptional Sequences}]\label{def:quiver-exceptional-sequence} A sequence
\[
\mathcal{E}=(\varepsilon_1,\ldots,\varepsilon_N)
\]
of dimension vectors of $\Q$ is said to be a \emph{quiver exceptional sequence} if:
\begin{enumerate}
\item $\langle \varepsilon_i,\varepsilon_i\rangle =1$ and $\varepsilon_i$ is the dimension vector of a Schur representation for all \(i\in [N]\);
    
\item $\langle \varepsilon_i,\varepsilon_j\rangle \le 0$ and $\varepsilon_j\circ \varepsilon_i \neq 0$ for all $1\le i<j\le N$.
\end{enumerate}
\end{definition}

To any quiver exceptional sequence $\mathcal{E}=(\varepsilon_1,\ldots,\varepsilon_N)$, we associate the quiver $\Q(\mathcal{E})$ with vertices $\{1,\ldots,N \}$ and
\[
-\langle \varepsilon_i,\varepsilon_j\rangle
\]
arrows from vertex \(i\) to vertex \(j\) for all \(1\le i\ne j\le N\). Let
\begin{equation}\label{eqn:transf-qes}
I:\RR^N\to \RR^{\Q_0}
\end{equation}
be the map defined by
\[
I(\gamma_1,\ldots,\gamma_N):=\sum_{i=1}^N \gamma_i\varepsilon_i
\]
for all $\gamma=(\gamma_1,\ldots,\gamma_N)\in \RR^N$.

We are now ready to state Derksen--Weyman's Embedding Theorem.

\begin{theorem}[\textbf{The Embedding Theorem for Quiver Semi-Invariants}; see \cite{DW2}]\label{thm:embedding}
Let
\[
\mathcal{E}=(\varepsilon_1,\ldots,\varepsilon_N)
\]
be a quiver exceptional sequence for $\Q$. If \(\alpha\) and \(\beta\) are two dimension vectors of $\Q(\mathcal{E})$, then
\[
(\alpha\circ\beta)_{\Q(\mathcal{E})}=(I(\alpha)\circ I(\beta))_\Q.
\]
\end{theorem}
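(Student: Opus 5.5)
The plan is to realize $\Q(\mathcal{E})$ as a full exact subcategory of representations of $\Q$, then compare the Schofield semi-invariants $c^V$ on both sides.

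\textbf{Step 1: a realization of the exceptional sequence.} For each $i$, let $E_i$ be the unique (up to isomorphism) Schur representation of dimension $\varepsilon_i$. It is rigid because $\langle\varepsilon_i,\varepsilon_i\rangle=1$ forces $\mathrm{Ext}^1_\Q(E_i,E_i)=0$. For $i<j$, the condition $\varepsilon_j\circ\varepsilon_i\neq 0$ together with the criterion from \cite{DW1} quoted above gives $\Hom_\Q(E_j,E_i)=0$ and $\langle\varepsilon_j,\varepsilon_i\rangle=0$. Hence $\mathrm{Ext}^1_\Q(E_j,E_i)=0$ as well, so $(E_1,\dots,E_N)$ is an exceptional sequence in the usual categorical sense. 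The condition $\langle\varepsilon_i,\varepsilon_j\rangle\le 0$ for $i<j$ is then used to make $\Hom_\Q(E_i,E_j)=0$. This leaves $\mathrm{Ext}^1_\Q(E_i,E_j)$ of dimension $-\langle\varepsilon_i,\varepsilon_j\rangle$, which is exactly the number of arrows $i\to j$ in $\Q(\mathcal{E})$.

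\textbf{Step 2: the embedding functor.} The extension-closed subcategory $\mathcal{C}(\mathcal{E})$ generated by the $E_i$ is equivalent to $\rep(\Q(\mathcal{E}))$ via an exact, fully faithful functor $F$ with $F(S_i)=E_i$. Because $F$ is exact and preserves $\Hom$ and $\mathrm{Ext}^1$, we get $\ddim F(W)=I(\ddim W)$ and $\langle I(\alpha),I(\beta)\rangle_\Q=\langle\alpha,\beta\rangle_{\Q(\mathcal{E})}$. I would make $F$ algebraic by building iterated universal extensions. This yields a morphism $\varphi:\rep(\Q(\mathcal{E}),\beta)\to\rep(\Q,I(\beta))$ that is equivariant with respect to a homomorphism $\GL(\beta)\to\GL(I(\beta))$, and under which the character $\chi_{\langle I(\alpha),\cdot\rangle}$ pulls back to $\chi_{\langle\alpha,\cdot\rangle}$.

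\textbf{Step 3: surjectivity of the pullback.} By the Derksen--Weyman spanning theorem, $\SI(\Q(\mathcal{E}),\beta)_{\langle\alpha,\cdot\rangle}$ is spanned by the semi-invariants $c^V$ with $\ddim V=\alpha$, where $c^V(W)$ is the determinant of the map $\Hom(V,W)\to\mathrm{Ext}^1(V,W)$ built from a projective resolution of $V$. Since $F$ preserves $\Hom$, $\mathrm{Ext}^1$, and the relevant resolutions up to the identifications in Step 2, we should have $c^{F(V)}\circ\varphi=c^V$, up to a nonzero scalar. So pulling back along $\varphi$ maps $\SI(\Q,I(\beta))_{\langle I(\alpha),\cdot\rangle}$ onto $\SI(\Q(\mathcal{E}),\beta)_{\langle\alpha,\cdot\rangle}$, which gives the inequality $\ge$.

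\textbf{Step 4 (main obstacle): injectivity.} For the reverse inequality, the plan is to show that $\GL(I(\beta))\cdot\varphi(\rep(\Q(\mathcal{E}),\beta))$ is dense in $\rep(\Q,I(\beta))$. Equivalently, a general representation of dimension $I(\beta)$ admits a filtration whose subquotients are copies of the $E_i$, arranged in the order prescribed by $\mathcal{E}$. I would first try induction on $N$. Peel off the last exceptional $E_N$ using the perpendicular category $E_N^{\perp}$, which is again equivalent to representations of a smaller quiver. The vanishing $\Hom_\Q(E_j,E_i)=\mathrm{Ext}^1_\Q(E_j,E_i)=0$ for $i<j$ guarantees that generic subrepresentations and quotients behave as needed.

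If density fails in general, the fallback is to compare dimensions directly. Write both weight spaces as multiplicities via the Cauchy-type decomposition of $\CC[\rep]$. Then check that the functor $F$ matches the relevant Littlewood--Richardson data, and that no semi-invariant on $\Q$ vanishes on the image of $\varphi$.
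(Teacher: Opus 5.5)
The paper does not prove this theorem. It is quoted from Derksen--Weyman \cite{DW2} and used as a black box, so your proposal has to stand on its own. Its architecture is sound: realize $\rep(\Q(\mathcal{E}))$ as the extension-closed subcategory $\mathcal{F}(E_1,\dots,E_N)$ of $\rep(\Q)$, get $\ge$ by pulling back the $c^V$, and get $\le$ from density. Steps 1--2 are essentially right, with two remarks.

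First, $\Hom_\Q(E_i,E_j)=0$ for $i<j$ does not follow from $\langle\varepsilon_i,\varepsilon_j\rangle\le 0$ alone. You need the Happel--Ringel dichotomy: for an exceptional pair, at most one of $\Hom_\Q(E_i,E_j)$ and $\operatorname{Ext}^1_\Q(E_i,E_j)$ is nonzero.

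Second, no universal extensions are needed. Set $\varphi(W)_x=\bigoplus_i W_i\otimes (E_i)_x$ and $\varphi(W)_a=\bigoplus_i \Id\otimes (E_i)_a+\sum_{b\colon i\to j}W_b\otimes\eta_b$, where the $\eta_b$ are cocycles representing a basis of $\operatorname{Ext}^1_\Q(E_i,E_j)$. This is already an exact, fully faithful, equivariant functor with the character pullback you state.

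The genuine gap is that the inequality $\dim\SI(\Q,I(\beta))_{\langle I(\alpha),\cdot\rangle}\le\dim\SI(\Q(\mathcal{E}),\beta)_{\langle\alpha,\cdot\rangle}$ is never proved. Step 4 is only a plan, and the fallback (``no semi-invariant on $\Q$ vanishes on the image of $\varphi$'') merely restates injectivity. The missing ingredient is Schofield's criterion: every representation of dimension $\gamma+\delta$ has a subrepresentation of dimension $\delta$ if and only if $\operatorname{ext}(\delta,\gamma)=0$.

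Induct on $N$ with $\delta=\beta_N\varepsilon_N$ and $\gamma=\sum_{i<N}\beta_i\varepsilon_i$.
\begin{enumerate}
\item The general representation of dimension $\delta$ is $E_N^{\beta_N}$. By induction, the general one of dimension $\gamma$ lies in $\mathcal{F}(E_1,\dots,E_{N-1})$.
\item The vanishing $\operatorname{Ext}^1_\Q(E_N,E_i)=0$ for $i<N$, which you already have, gives $\operatorname{ext}(\delta,\gamma)=0$.
\item Let $Z$ be the linear space of representations preserving a fixed graded subspace of dimension $\delta$. Then $\GL(\gamma+\delta)\times Z\to\rep(\Q,\gamma+\delta)$ is surjective, and $Z\to\rep(\Q,\delta)\times\rep(\Q,\gamma)$ is a linear surjection.
\item Hence a general $X$ has a subrepresentation isomorphic to $E_N^{\beta_N}$ with quotient in $\mathcal{F}(E_1,\dots,E_{N-1})$, so $X\in\mathcal{F}(E_1,\dots,E_N)$.
\item The matrix $(\langle\varepsilon_i,\varepsilon_j\rangle)$ is unitriangular, so $I$ is injective and $X\cong\varphi(W)$ with $\ddim W=\beta$.
\end{enumerate}
Thus the $\GL(I(\beta))$-saturation of the image of $\varphi$ is dense and $\varphi^*$ is injective. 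No perpendicular categories are needed.

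Step 3 is also only asserted, and the reason you give is not correct. $F$ does not send projective resolutions to projective resolutions: the $F(P)$ are only Ext-projective inside $\mathcal{F}(E_1,\dots,E_N)$. Moreover, knowing that $c^{F(V)}\circ\varphi$ and $c^V$ have the same zero set would not give proportionality.

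One clean repair is to work over $R=\CC[\rep(\Q(\mathcal{E}),\beta)]$ with the universal representation $\mathcal{W}$. The two square matrices whose determinants are $c^V$ and $c^{F(V)}\circ\varphi$ have isomorphic cokernels, both equal to $\operatorname{Ext}^1$ of the universal family. For the second one, use the exact sequence $0\to F(P_1)\to F(P_0)\to F(V)\to 0$, the pointwise vanishing of $\operatorname{Ext}^1_\Q(F(P_j),F(W))$, fiberwise full faithfulness on the $\Hom$ terms, and Nakayama. The two determinants therefore define the same divisor on an affine space, so they differ by a nonzero constant. With these repairs your outline becomes a complete proof.
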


\section{The number of $3$-way contingency tables as parabolic Kostka coefficients} In this section, we will work with the $p$-complete bipartite quiver with set of source vertices $\{x_1 \ldots, x_m\}$, set of sink vertices $\{y_1, \ldots y_n \}$, and such that there are $p$ arrows from any source vertex $x_i$ to any sink vertex $y_j$:
\begin{figure}[H]
\centering
\begin{tikzpicture}[>=Latex, scale=0.95]
    \node (x1) at (0,2) {$x_1$};
    \node (xv) at (0,0.5) {$\vdots$};
    \node (xm) at (0,-1) {$x_m$};

    \node (y1) at (4,2) {$y_1$};
    \node (yv) at (4,0.5) {$\vdots$};
    \node (yn) at (4,-1) {$y_n$};

    \node at (-2.9,0.5) {$\Q^p_{m,n}:$};

    \draw[->, bend left=10]  (x1) to (y1);
    \draw[->, bend right=10] (x1) to (y1);
    \node at (2,2.12) {$\vdots$};
    \node at (2,2.45) {$p$ arrows};

    \draw[->, bend left=10]  (xm) to (yn);
    \draw[->, bend right=10] (xm) to (yn);
    \node at (2,-0.88) {$\vdots$};

    \draw[->, bend left=12]  (x1) to (yn);
    \draw[->, bend right=12] (x1) to (yn);
    \node at (2.85,-0.10) {$\vdots$};

    \draw[->, bend left=12]  (xm) to (y1);
    \draw[->, bend right=12] (xm) to (y1);
    \node at (3.15,1.40) {$\vdots$};
\end{tikzpicture}
\end{figure}

Recall that $\theta_{\bfa, \bfb}$ is the integral weight that assigns $a_i$ to the $i^{th}$ source vertex and $-b_j$ to the $j^{th}$ sink vertex of $\Q^p_{m,n}$.  Then the network flow polytope $\p_{\theta_{\bfa, \bfb}}$ is the set of all $3$-way contingency tables with margins given by $(\ref{eqn-margin-1})$ and $(\ref{eqn-margin-2})$.  It now follows from Lemma \ref{lemma:flows-semi-inv-general} that
\begin{equation}\label{eqn:formual-numCT-semi-inv-thin}
\num_{\bfa,\bfb}=\dim \SI(\Q^p_{m,n}, \mathbf{1})_{\theta_{\bfa,\bfb}}.
\end{equation}

Next, we explain how to simplify the task of computing semi-invariants for $\Q^p_{m,n}$ via the Embedding Theorem \ref{thm:embedding}. To this end, we introduce the following star quiver
\begin{center}
\scalebox{0.85}{%
\begin{tikzpicture}[>=stealth]
  \tikzset{vertex/.style={circle,fill=black,inner sep=1.1pt}}

  \node at (-4.9,-0.35) {$\s$:};

  \node[vertex,label=left:$x_1$]     (x1)   at (-1.7,  1.4) {};
  \node                              at (-1.7,  0.7) {$\vdots$};
  \node[vertex,label=left:$x_m$]     (xm)   at (-1.7,  0.0) {};
  \node[vertex,label=left:$x_{m+1}$] (xmp1) at (-1.7, -0.7) {};
  \node                              at (-1.7, -1.4) {$\vdots$};
  \node[vertex,label=left:$x_{m+p}$] (xmp)  at (-1.7, -2.1) {};

  \node[vertex,label=below:$z_0$] (z0) at (0,-0.35) {};

  \node[vertex,label=right:$y_1$] (y1) at (1.7,  0.7) {};
  \node                           at (1.7, -0.35) {$\vdots$};
  \node[vertex,label=right:$y_n$] (yn) at (1.7, -1.4) {};

  \draw[->,bend left=8]  (x1)   to (z0);
  \draw[->]              (xm)   -- (z0);
  \draw[->]              (xmp1) -- (z0);
  \draw[->,bend right=8] (xmp)  to (z0);

  \draw[->,bend left=8]  (z0) to (y1);
  \draw[->,bend right=8] (z0) to (yn);
\end{tikzpicture}%
}
\end{center}
Let $\beta$ be the dimension vector of $\s$ defined by
\[
\beta(x_\ell)=
\begin{cases}
1 & \text{if } \ell \in [m] \\
m & \text{if } \ell \in \{m+1, \ldots, m+p\},
\end{cases}
\]
$\beta(z_0)=mp$, and $\beta(y_j)=1, \forall j \in [n]$. We also need the weight $\sigma_{\bfa, \bfb}$ of $\s$ defined by
\[
\sigma_{\bfa,\bfb}(x_\ell)=
\begin{cases}
a_\ell & \text{if } \ell \in [m] \\
N & \text{if } \ell \in \{m+1, \ldots, m+p\},
\end{cases}
\]
and $\sigma_{\bfa, \bfb}(z_0)=-N$, and $\sigma_{\bfa, \bfb}(y_j)=-b_j, \forall j \in [n]$.

\begin{prop}\label{prop:main} Keep the same notation as above. Then
\begin{equation}
\num_{a,b}=\dim_\CC \SI(\s, \beta)_{\sigma_{\bfa, \bfa}}.
\end{equation}
\end{prop}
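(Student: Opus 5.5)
The plan is to realize $\Q^p_{m,n}$ as the quiver $\Q(\mathcal{E})$ of a quiver exceptional sequence $\mathcal{E}$ in $\s$, and then transport $(\ref{eqn:formual-numCT-semi-inv-thin})$ through the Embedding Theorem \ref{thm:embedding}.

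\textbf{The exceptional sequence.} In $\s$ I take
\[
\varepsilon_{x_i}:=e_{x_i}+\sum_{k=1}^{p} e_{x_{m+k}}+p\,e_{z_0}\quad (i\in[m]),\qquad \varepsilon_{y_j}:=e_{y_j}\quad (j\in[n]).
\]
I order them as $\mathcal{E}=(\varepsilon_{x_1},\dots,\varepsilon_{x_m},\varepsilon_{y_1},\dots,\varepsilon_{y_n})$. A direct computation with the Euler form of $\s$ gives:
\begin{itemize}
\item $\langle\varepsilon_{x_i},\varepsilon_{x_i}\rangle=1+p+p^2-p-p^2=1$;
\item $\langle\varepsilon_{x_i},\varepsilon_{x_{i'}}\rangle=p+p^2-p^2-p=0$ for $i\neq i'$;
\item $\langle\varepsilon_{x_i},\varepsilon_{y_j}\rangle=-p$ and $\langle\varepsilon_{y_j},\varepsilon_{x_i}\rangle=0$;
\item $\langle\varepsilon_{y_j},\varepsilon_{y_{j'}}\rangle=\delta_{jj'}$.
\end{itemize}
Hence $\Q(\mathcal{E})=\Q^p_{m,n}$. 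Moreover, summing the vectors gives $I(\mathbf{1})=\sum_i\varepsilon_{x_i}+\sum_j e_{y_j}=\beta$, since $x_{m+k}$ receives $m$ and $z_0$ receives $mp$.

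\textbf{Remaining exceptionality conditions.} These are the main thing to check.
\begin{itemize}
\item The vectors are Schur roots. For $e_{y_j}$ this is clear. For $\varepsilon_{x_i}$ I would argue that it is a real Schur root by exhibiting a generic representation. Put $\CC^p=\bigoplus_k\CC$ at $z_0$, with $x_{m+k}$ mapping to the $k$-th coordinate and $x_i$ mapping to a vector with all coordinates nonzero; its endomorphisms are scalars.
\item For $i<j$ in the order we need $\varepsilon_j\circ\varepsilon_i\neq 0$. Since the relevant Euler forms vanish, by the Derksen--Weyman criterion it suffices to find representations with vanishing $\Hom$.
\begin{itemize}
\item For the pairs $(\varepsilon_{x_i},e_{y_j})$, $\Hom_\s(S_{y_j},V)=0$ because $V_{y_j}=0$.
\item For $(\varepsilon_{x_i},\varepsilon_{x_{i'}})$, take the two generic representations above with different (generic) images of $x_i$, respectively $x_{i'}$, in $\CC^p$. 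Any morphism is zero at $x_i$ and $x_{i'}$ (one of the two spaces vanishes there). It is then forced to be diagonal on the coordinate lines and to kill the image of $x_i$, hence it is zero.
\item For $(e_{y_j},e_{y_{j'}})$, simples at distinct vertices have no homomorphisms.
\end{itemize}
\end{itemize}

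\textbf{The weight.} By $(\ref{eqn:formual-numCT-semi-inv-thin})$, $\num_{\bfa,\bfb}=(\alpha\circ\mathbf{1})_{\Q^p_{m,n}}$, where $\alpha$ is determined by $\langle\alpha,\cdot\rangle=\theta_{\bfa,\bfb}$. Solving gives $\alpha_{x_i}=a_i$ and $\alpha_{y_j}=pN-b_j$. The Embedding Theorem \ref{thm:embedding} then yields $\num_{\bfa,\bfb}=(I(\alpha)\circ\beta)_\s$. Here $I(\alpha)$ takes the values $a_i$ at $x_i$, $N$ at $x_{m+k}$, $pN$ at $z_0$, and $pN-b_j$ at $y_j$. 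Computing $\langle I(\alpha),e_v\rangle_\s$ gives:
\begin{itemize}
\item $a_i$ at $x_i$ and $N$ at $x_{m+k}$;
\item $pN-\sum_i a_i-pN=-N$ at $z_0$;
\item $pN-b_j-pN=-b_j$ at $y_j$.
\end{itemize}
This is exactly $\sigma_{\bfa,\bfb}$, which proves the claim.

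\textbf{Expected difficulty.} The one delicate point is that $\alpha$ need not be a dimension vector if some $b_j>pN$. But $b_j\le N$, so $pN-b_j\ge0$ and $\alpha$ is a genuine dimension vector, and the theorem applies directly. The real work is therefore the Schur and $\Hom$-vanishing checks above, which I expect to be routine.
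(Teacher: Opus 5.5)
Your proof is correct, but it reaches $\s$ by a different route than the paper.

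\textbf{The paper's route.} The paper does not embed $\Q^p_{m,n}$ into $\s$ directly. Following \cite{ChiColKli-2025}, it uses a larger quiver $\river$, which has an extra vertex $x_0$ between the sources $x_1,\dots,x_{m+p}$ and $y_0$. It takes the exceptional sequence $(\delta_1,\dots,\delta_m,e_{y_1},\dots,e_{y_n})$ with $\delta_i(x_0)=p+1$ and $\delta_i(y_0)=p$, and quotes exceptionality from \cite[Proposition 3.4]{ChiColKli-2025}. After the Embedding Theorem, the resulting weight has $\widehat{\sigma}_{\bfa,\bfb}(x_0)=0$ and $\widehat{\beta}(x_0)=(p+1)m>pm=\widehat{\beta}(y_0)$. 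The paper then deletes $x_0$ with Lemma \ref{lemma:remove-zero-weight}, whose proof rests on the Fundamental Theorem for $\GL(n)$. This turns $\river$ into $\s$, with $y_0=z_0$.

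\textbf{How yours compares.} Your vectors $\varepsilon_{x_i}$ are exactly the restrictions of the $\delta_i$ to $\s$. In effect, you perform the vertex removal at the level of the exceptional sequence rather than at the level of semi-invariants.

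\begin{enumerate}
\item What it costs you: you must verify the Schur condition and the $\Hom$-vanishing conditions by hand. You do this correctly, with $\Hom(V_{\varepsilon_j},V_{\varepsilon_i})=0$ for $i<j$. For the pair $(\varepsilon_{x_i},\varepsilon_{x_{i'}})$ your argument kills $\Hom$ in both directions, so the order does not matter there.
\item What it gains you: the argument is self-contained and uses a single application of the Embedding Theorem. It needs neither Lemma \ref{lemma:remove-zero-weight} nor the external Proposition 3.4.
\item What the paper's route gains instead: it reuses an exceptional sequence already established in earlier work, so nothing new has to be checked.
\end{enumerate}

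You also make explicit a hypothesis of the Embedding Theorem that the paper leaves implicit: $\alpha$ is a genuine dimension vector of $\Q^p_{m,n}$, because $b_j\le N\le pN$.
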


\begin{proof}
Following \cite{ChiColKli-2025}, we consider the quiver
\begin{center}
\scalebox{0.95}{%
\begin{tikzpicture}[>=stealth]
  \tikzset{vertex/.style={circle,fill=black,inner sep=1.1pt}}

  \node at (-4.1,-0.35) {$\river:$};

  \node[vertex,label=left:$x_1$]     (x1)   at (-1.9,  1.6) {};
  \node                              at (-1.9,  0.9) {$\vdots$};
  \node[vertex,label=left:$x_m$]     (xm)   at (-1.9,  0.2) {};
  \node[vertex,label=left:$x_{m+1}$] (xmp1) at (-1.9, -0.5) {};
  \node                              at (-1.9, -1.2) {$\vdots$};
  \node[vertex,label=left:$x_{m+p}$] (xmp)  at (-1.9, -1.9) {};

  \node[vertex,label=above:$x_0$] (x0) at (0,-0.15) {};
  \node[vertex,label=above:$y_0$] (y0) at (2.2,-0.15) {};

  \node[vertex,label=right:$y_1$] (y1) at (4.4,  0.9) {};
  \node                           at (4.4, -0.15) {$\vdots$};
  \node[vertex,label=right:$y_n$] (yn) at (4.4, -1.2) {};

  \draw[->,bend left=12]  (x1)   to (x0);
  \draw[->]               (xm)   -- (x0);
  \draw[->]               (xmp1) -- (x0);
  \draw[->,bend right=12] (xmp)  to (x0);

  \draw[->] (x0) -- (y0);

  \draw[->,bend left=12]  (y0) to (y1);
  \draw[->,bend right=12] (y0) to (yn);
\end{tikzpicture}%
}
\end{center}

\noindent
Furthermore, for each $i \in [m]$, let $\delta_i$ be the dimension vector defined by
\[
\delta_i(x_0)=p+1,\qquad \delta_i(y_0)=p,\qquad \delta_i(x_i)=1,
\]
\[
\delta_i(x_{m+1})=\cdots=\delta_i(x_{m+p})=1,
\]
and
\[
\delta_i(v)=0\quad\text{for all other vertices }v\in \river_0.
\]

\noindent
It follows from \cite[Proposition 3.4]{ChiColKli-2025} that
\[
\mathcal{E}:=(\delta_1,\ldots,\delta_m,e_{y_1},\ldots,e_{y_n})
\]
is an exceptional sequence of $\river$ with
\[
\river(\mathcal{E})=\Q^p_{m,n}.
\]

\noindent
Now let $I: \RR^{\Q_0} \to \RR^{\river_0}$ be the transformation $(\ref{eqn:transf-qes})$ corresponding to $\river$ and $\mathcal{E}$. Then the dimension vector $\widehat{\beta}:=I(\mathbf{1})$ of $\river$ is given by
\[
\widehat{\beta}(x_i)=1 \quad (i\in [m]),\qquad \widehat{\beta}(x_{m+\ell})=m \quad (\ell \in [p]), \qquad 
\widehat{\beta}(x_0)=(p+1)m,
\]
\[
\widehat{\beta}(y_0)=pm,\qquad
\widehat{\beta}(y_j)=1 \quad (j\in [n]).
\]

\noindent
Next, let us write
\[
\theta_{a,b}=\langle \alpha,-\rangle_{\Q^p_{m,n}},
\]
where $\alpha$ is the dimension vector of $\Q^p_{m,n}$ given by
\[
\alpha(x_i)=a_i \quad (i\in[m]),\qquad
\alpha(y_j)=pN-b_j \quad (j\in[n]).
\]

\noindent
Then the dimension vector $I(\alpha)$ of $\river$ is given by
\[
I(\alpha)(x_i)=a_i \quad (i\in [m]),\qquad
I(\alpha)(x_{m+\ell})=N \quad (\ell \in [p]),
\]
\[
I(\alpha)(x_0)=(p+1)N,\qquad
I(\alpha)(y_0)=pN,
\]
\[
I(\alpha)(y_j)=pN-b_j \quad (j\in [n]).
\]

\noindent
Hence the weight $\widehat{\sigma}_{\bfa,\bfb}:=\langle I(\alpha),-\rangle_{\river}$ of $\river$ is given by
\[
\widehat{\sigma}_{\bfa,\bfb}(x_i)=a_i \quad (i\in [m]),\qquad
\widehat{\sigma}_{\bfa,\bfb}(x_{m+\ell})=N \quad (\ell \in [p]),
\]
\[
\widehat{\sigma}_{\bfa,\bfb}(x_0)=0,\qquad
\widehat{\sigma}_{\bfa,\bfb}(y_0)=-N,
\]
\[
\widehat{\sigma}_{\bfa,\bfb}(y_j)=-b_j \quad (j\in [n]).
\]

\noindent
Applying Theorem \ref{thm:embedding}, we obtain that
\[
\dim \SI(\Q^p_{m,n},\mathbf{1})_{\theta_{\bfa,\bfb}}
=
\dim \SI(\river,\widehat{\beta})_{\widehat{\sigma}_{\bfa,\bfb}}.
\]

\noindent
Since $\widehat{\sigma}_{a,b}(x_0)=0$ and $\widehat{\beta}(x_0)>\widehat{\beta}(y_0)$, it follows from Theorem \ref{lemma:remove-zero-weight} that
\[
\dim \SI(\river,\widehat{\beta})_{\widehat{\sigma}_{\bfa,\bfb}}
=
\dim \SI(\s,\beta)_{\sigma_{\bfa, \bfb}}.
\]
The proof now follows.
\end{proof}

We are now ready to prove Theorem \ref{thm:main-1}. For background material relevant to the computations in the proof below, we refer the reader to \cite[Section 4]{ChiColKli-2025}.

\begin{proof}[Proof of Theorem \ref{thm:main-1}]
We know from Proposition \ref{prop:main} that
\begin{equation}\label{eqn:proof-thm1}
\num_{\bfa,\bfb}=\dim \SI(\s,\beta)_{\sigma_{\bfa,\bfb}}.
\end{equation}
Now, applying $(\ref{eqn:semi-inv-reflections})$ at the source vertices $x_1,\dots,x_{m+p}$, we get that
\begin{equation}\label{eqn:proof-thm2}
\dim \SI(\s,\beta)_{\sigma_{\bfa,\bfb}}=\dim \SI(\widetilde{\s},\widetilde{\beta})_{\widetilde{\sigma}_{\bfa,\bfb}},
\end{equation}
where

\begin{center}
\scalebox{0.85}{%
\begin{tikzpicture}[>=stealth]
  \tikzset{vertex/.style={circle,fill=black,inner sep=1.1pt}}

  \node at (-4.1,-0.35) {$\widetilde{\s}:$};

  \node[vertex,label=left:$x_1$]     (x1)   at (-1.7,  1.4) {};
  \node                              at (-1.7,  0.7) {$\vdots$};
  \node[vertex,label=left:$x_m$]     (xm)   at (-1.7,  0.0) {};
  \node[vertex,label=left:$x_{m+1}$] (xmp1) at (-1.7, -0.7) {};
  \node                              at (-1.7, -1.4) {$\vdots$};
  \node[vertex,label=left:$x_{m+p}$] (xmp)  at (-1.7, -2.1) {};

  \node[vertex,label=below:$z_0$] (z0) at (0,-0.35) {};

  \node[vertex,label=right:$y_1$] (y1) at (1.7,  0.7) {};
  \node                           at (1.7, -0.35) {$\vdots$};
  \node[vertex,label=right:$y_n$] (yn) at (1.7, -1.4) {};

  \draw[->,bend left=8]  (z0) to (x1);
  \draw[->]              (z0) -- (xm);
  \draw[->]              (z0) -- (xmp1);
  \draw[->,bend right=8] (z0) to (xmp);

  \draw[->,bend left=8]  (z0) to (y1);
  \draw[->,bend right=8] (z0) to (yn);
\end{tikzpicture}%
}
\end{center}
\noindent
and
\[
\widetilde{\beta}(x_1)=\cdots=\widetilde{\beta}(x_m)=pm-1,\qquad
\widetilde{\beta}(x_{m+1})=\cdots=\widetilde{\beta}(x_{m+p})=m(p-1),
\]
\[
\widetilde{\beta}(z_0)=pm,\qquad
\widetilde{\beta}(y_1)=\cdots=\widetilde{\beta}(y_n)=1,
\]
and
\[
\widetilde{\sigma}_{a,b}(x_i)=-a_i \qquad (i\in[m]),\qquad
\widetilde{\sigma}_{a,b}(x_{m+\ell})=-N \qquad (\ell\in[p]),
\]
\[
\widetilde{\sigma}_{a,b}(z_0)=pN,\qquad
\widetilde{\sigma}_{a,b}(y_j)=-b_j \qquad (j\in[n]).
\]

To find a closed formula for $\dim \SI(\widetilde{\s},\widetilde{\beta})_{\widetilde{\sigma}_{\bfa,\bfb}}$, we proceed as follows. First, we use Cauchy's formula to decompose $\CC[\rep(\widetilde{\s},\widetilde{\beta})]$ into a direct sum of irreducible representations of $\GL(\widetilde{\beta})$. Then, we consider the ring of semi-invariants
\[
\SI(\widetilde{\s},\widetilde{\beta}):=\CC[\rep(\widetilde{\s},\widetilde{\beta})]^{\SL(\widetilde{\beta})}
\]
and sort out those semi-invariants of weight $\widetilde{\sigma}_{\bfa,\bfb}$. For convenience, we write
\[
V_i=\CC^{\widetilde{\beta}(x_i)},\ i\in[m+p],\qquad V=\CC^{pm},\qquad
W_j=\CC^{\widetilde{\beta}(y_j)}=\CC,\ j\in[n].
\]
Then
\[
\CC[\rep(\widetilde{\s},\widetilde{\beta})]
=
\CC\Biggl[\prod_{i=1}^{m+p}\Hom_{\CC}(V,V_i)\times \prod_{j=1}^{n}\Hom_{\CC}(V,W_j)\Biggr]
\]
\[
=\bigotimes_{i=1}^{m+p} S(V\otimes V_i^*)\otimes \bigotimes_{j=1}^{n} S(V\otimes W_j^*)
\]
\[
=\bigoplus \bigotimes_{i=1}^{m+p} S^{\lambda(i)}V_i^*\otimes \bigotimes_{j=1}^{n} S^{\mu(j)}W_j^*
\otimes
\left(S^{\lambda(1)}V\otimes \cdots \otimes S^{\lambda(m+p)}V\otimes S^{\mu(1)}V\otimes \cdots \otimes S^{\mu(n)}V\right),
\]
where the direct sum is over all partitions $\lambda(i)$ and $\mu(j)$ with $\ell(\lambda(i))\leq \dim V_i$, $\forall i\in[m+p]$, and $\ell(\mu(j))\leq 1$, $\forall j\in[n]$.

Thus, $\SI(\widetilde{\s},\widetilde{\beta})$ can be written as
\[
\bigoplus \bigotimes_{i=1}^{m+p} \left(S^{\lambda(i)}V_i^*\right)^{\SL(V_i)}
\otimes
\bigotimes_{j=1}^{n} \left(S^{\mu(j)}W_j^*\right)^{\SL(W_j)}
\otimes
\left(\bigotimes_{i=1}^{m+p} S^{\lambda(i)}V\otimes \bigotimes_{j=1}^{n} S^{\mu(j)}V\right)^{\SL(V)}.
\]
Sorting out those semi-invariants of weight $\widetilde{\sigma}_{\bfa,\bfb}$ completely determines the partitions $\lambda(i)$ and $\mu(j)$. This way we get that $\SI(\widetilde{\s},\widetilde{\beta})_{\widetilde{\sigma}_{\bfa,\bfb}}$ is isomorphic to the weight space of weight $\widetilde{\sigma}_{\bfa,\bfb}(z_0)=pN$ that occurs in the weight space decomposition of
\[
\left(
\underbrace{S^{(N^{p(m-1)})}V\otimes \cdots \otimes S^{(N^{p(m-1))}V}}_{p\text{ times}}
\otimes \bigotimes_{i=1}^{m} S^{(a_i^{pm-1})}V
\otimes \bigotimes_{j=1}^{n} S^{b_j}V
\right)^{\SL(V)}.
\]
Thus, $\SI(\widetilde{\mathcal{S}},\widetilde{\beta})_{\widetilde{\sigma}_{a,b}}$ is isomorphic to
\[
\left(
\underbrace{S^{(N^{p(m-1)})}V\otimes \cdots \otimes S^{(N^{p(m-1)})}V}_{p\text{ times}}
\otimes \bigotimes_{i=1}^{m} S^{(a_i^{pm-1})}V
\otimes \bigotimes_{j=1}^{n} S^{b_j}V
\otimes S^{((pN)^{pm})}V^*
\right)^{\GL(V)}.
\]
The dimension of this weight space is precisely $K_{\lambda,R}$. This together with $(\ref{eqn:proof-thm1})$ and $(\ref{eqn:proof-thm2})$ yields the desired formula for $\num_{\bfa,\bfb}$.
\end{proof}

\begin{corollary}\label{coro:RSK-corollary} The following formula holds.
\begin{equation}\label{eqn:formula-RSK}
\num_{\bfa,\bfb}=\sum_{\mu} K_{\mu, \{(a_1), \ldots, (a_m), \underbrace{(N^m), \ldots, (N^m)}_{p \text{~times}} \}} \cdot K_{\mu, \{(b_1), \ldots, (b_n), (N^{mp})\}},
\end{equation} 
where the sum is over all partitions $\mu$ with at most $mp$ parts.  When $p=1$, this recovers the formula for the number of the $m \times n$ contingency tables with row and column sums fixed by $\bfa$ and $\bfb$, as derived from the Robinson-Schenstead-Knuth correspondence.
\end{corollary}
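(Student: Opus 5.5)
The plan is to go back to the star quiver $\s$ of Proposition \ref{prop:main}, before any reflection, and decompose $\SI(\s,\beta)_{\sigma_{\bfa,\bfb}}$ at the central vertex $z_0$ with Cauchy's formula. The incoming arrows $x_\ell\to z_0$ then give one tensor factor, the outgoing arrows $z_0\to y_j$ give another, and a $\GL(V)$-invariant pairing between them produces the sum over $\mu$. Write $V=\CC^{mp}=\CC^{\beta(z_0)}$ and $V_\ell=\CC^{\beta(x_\ell)}$, $W_j=\CC$. Cauchy's formula gives
\[
\CC[\rep(\s,\beta)]=\bigotimes_{\ell=1}^{m+p}\bigoplus_{\lambda(\ell)} S^{\lambda(\ell)}V_\ell\otimes S^{\lambda(\ell)}V^*\ \otimes\ \bigotimes_{j=1}^{n}\bigoplus_{\mu(j)} S^{\mu(j)}V\otimes S^{\mu(j)}W_j^*,
\]
with the dualization conventions to be matched to those used in the proof of Theorem \ref{thm:main-1}.

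Next I impose the weight $\sigma_{\bfa,\bfb}$ vertex by vertex, exactly as in the proof of Theorem \ref{thm:main-1}.
\begin{enumerate}
\item At $x_i$ with $i\in[m]$, we have $\dim V_i=1$ and weight $a_i$, which forces $\lambda(i)=(a_i)$.
\item At $x_{m+\ell}$ with $\ell\in[p]$, we have $\dim V_{m+\ell}=m$ and weight $N$. The only $\SL(V_{m+\ell})$-invariants of that weight in $S^{\lambda}V_{m+\ell}$ occur for $\lambda=(N^m)$.
\item At $y_j$, the weight $-b_j$ forces $\mu(j)=(b_j)$.
\item At $z_0$, the weight $-N$ contributes a power $\det^{\pm N}$ of $V$.
\end{enumerate}
The degrees check out: the $V^*$-side has total degree $N+pmN$, while the $V$-side has degree $N$ plus $mpN$ from the determinant twist. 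Hence
\[
\SI(\s,\beta)_{\sigma_{\bfa,\bfb}}\cong\Bigl(A^*\otimes B\Bigr)^{\GL(V)},
\]
where
\[
A=\bigotimes_{i=1}^m S^{(a_i)}V\otimes\bigl(S^{(N^m)}V\bigr)^{\otimes p},
\qquad
B=\bigotimes_{j=1}^n S^{(b_j)}V\otimes S^{(N^{mp})}V,
\]
using that $\det^N V=S^{(N^{mp})}V$.

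Now I expand both sides by the definition of parabolic Kostka coefficients. This gives
\[
A=\bigoplus_\mu K_{\mu,\{(a_1),\ldots,(a_m),(N^m),\ldots,(N^m)\}}\,S^\mu V
\quad\text{and}\quad
B=\bigoplus_\mu K_{\mu,\{(b_1),\ldots,(b_n),(N^{mp})\}}\,S^\mu V,
\]
with $\mu$ ranging over partitions with at most $mp$ parts. By Schur's lemma, $(S^\mu V^*\otimes S^\nu V)^{\GL(V)}$ has dimension $\delta_{\mu\nu}$. Combined with Proposition \ref{prop:main}, this yields $(\ref{eqn:formula-RSK})$.

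For $p=1$, I use that tensoring with $\det^N$ shifts highest weights. This gives $K_{\mu,\{(b_1),\ldots,(b_n),(N^{m})\}}=K_{\mu-(N^m),\bfb}$, which is nonzero only when $\mu\supseteq(N^m)$. Similarly, $K_{\mu,\{(a_1),\ldots,(a_m),(N^m)\}}=K_{\mu-(N^m),\bfa}$. Setting $\nu=\mu-(N^m)$, a partition with at most $m$ parts, the formula becomes $\sum_\nu K_{\nu,\bfa}K_{\nu,\bfb}$, which is the RSK count.

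I expect the main obstacle to be bookkeeping: getting the dual and sign conventions right, so that the determinant twist at $z_0$ lands on the side of $B$ rather than $A$ and the weight at $x_{m+\ell}$ really forces $(N^m)$. A cross-check is to derive the same identity from Theorem \ref{thm:main-1} itself. There one groups the rectangles of $\R$ into two blocks, applies the Littlewood–Richardson rule to split $K_{\lambda,\R}$ as a sum over an intermediate $\mu$, and uses that pairing with $S^{((pN)^{pm})}V^*$ is dualization up to a determinant twist.
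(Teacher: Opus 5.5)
Your proposal is correct and follows the paper's proof: invoke Proposition \ref{prop:main}, apply Cauchy's formula on $\CC[\rep(\s,\beta)]$ at $z_0$ (no reflections), and identify $\SI(\s,\beta)_{\sigma_{\bfa,\bfb}}$ with $(A^*\otimes B)^{\GL(V)}$, whose dimension is $\sum_\mu [A:S^\mu V]\,[B:S^\mu V]$. Your determinant-shift argument for $p=1$, which reduces the sum to $\sum_\nu K_{\nu,\bfa}K_{\nu,\bfb}$, supplies the RSK specialization that the paper only asserts.
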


\begin{proof} We know from Proposition \ref{prop:main} that
\[
\num_{\bfa,\bfb}=\dim \SI(\s,\beta)_{\sigma_{\bfa,\bfb}}.
\]
Using the same strategy as in the proof of Theorem~1.1, we get that $\SI(\s,\beta)_{\sigma_{\bfa,\bfb}}$ is isomorphic to
\begin{equation*}
\left(
S^{a_1}V^*\otimes \cdots \otimes S^{a_m}V^*
\otimes \underbrace{S^{(N^m)}V^*\otimes \cdots \otimes S^{(N^m)}V^*}_{p\text{ times}}
\otimes S^{b_1}V\otimes \cdots \otimes S^{b_n}V\otimes S^{(N^{mp})}V
\right)^{\GL(V)},
\end{equation*}
where $V=\CC^{mp}$. The dimension of this space is precisely the right hand side of $(\ref{eqn:formula-RSK})$.

\end{proof}

\subsection*{Acknowledgements}
D.P. was supported by the Bulgarian National Science Fund Contract No: KP-06-N92/5, the Ministry of Education and Science of the Republic of Bulgaria, grant DO1-239/10.12.2024 and the Simons Foundation grant SFI-MPS-T-Institutes-00007697.

\bibliography{biblio-cont-tables-QSI}\label{biblio-sec}
\bibliographystyle{alpha}

\end{document}